\newtheorem{theorem}{Theorem}
\newtheorem{lemma}[theorem]{Lemma}
\newtheorem*{claim*}{Claim}
\newtheorem{proposition}[theorem]{Proposition}
\theoremstyle{definition}
\newtheorem{definition}[theorem]{Definition}
\theoremstyle{remark}
\newtheorem{remark}[theorem]{Remark}
\newcommand\Q{\mathbb Q}
\newcommand\Z{\mathbb Z}
\newcommand\R{\mathbb R}
\newcommand\G{\mathbb G}
\newcommand\C{\mathbb C}
\newcommand{\RP}{\mathbb P_{\mathbb R}^2}
\newcommand{\CP}{\mathbb P_{\mathbb C}^2}
\newcounter{fig}
\newcommand{\f}{\refstepcounter{fig} Fig. \arabic{fig}. }
\title{Two circles and only a straightedge}
\author{Arseniy~Akopyan}
\address{Arseniy~Akopyan, Institute of Science and Technology Austria (IST Austria), Am Campus~1, 3400 Klosterneuburg, Austria}
\email{akopjan@gmail.com}
\author{Roman~Fedorov}
\address{Roman~Fedorov, University of Pittsburgh, The Dietrich School of Arts and Sciences, 301 Thackeray Hall,
Pittsburgh, PA 15260, USA}
\email{rmfedorov@gmail.com }
\begin{document}

\keywords{Straightedge; ruler; geometric constructions; circle}

\begin{abstract}
We answer a question of David Hilbert: given two circles it is not possible in general to construct their centers using only a straightedge. On the other hand, we give infinitely many families of pairs of circles for which such construction is possible.
\end{abstract}

\maketitle

\section{Introduction}
The famous Mohr--Mascheroni Theorem~\cite{mascheroni1797geometria, mohr1928euclides} states that any geometric construction that can be performed by a compass and a straightedge (a.k.a a ruler without marks) can be performed by a compass alone. Of course, since a straight line cannot be drawn by a~compass, we encode a line by any two points lying on it. On the other hand, if we use only a straightedge, we cannot construct even perpendicular lines; this will be discussed in Section~\ref{sect:perp}.

In 1833 J.~Steiner~\cite{steiner1833geometrischen} proved the conjecture of J.~V.~Poncelet stating that if a circle with a center is drawn on the plane, then everything that can be constructed with a compass and a straightedge can be constructed with the straightedge alone.
(Since a circle cannot be drawn by a straightedge, we encode a circle by its center and a point lying on this circle.) It was noted by D.~Hilbert that the center of the drawn circle plays a crucial role because there is a three-parametric family of projective transformations preserving the circle. These transformations preserve straightedge constructions but may change the center of the circle, so we cannot ``catch'' the center only by a straightedge. We refer to \cite{introduction_steiner1950geometrical_englishedition} for historical remarks and a further discussion of the problem.

If two intersecting, tangent, or concentric circles are drawn on the plane, then one can find their centers using only a straightedge; we will recall these constructions in Section~\ref{sect:intcircles}. D.~Hilbert has asked whether it is possible in general to construct the centers of two circles with a straightedge only (see the beginning of~\cite{cauer1912ueber} and~\cite[i]{introduction_steiner1950geometrical_englishedition}). This question was answered in the paper of D.~Cauer~\cite{cauer1912ueber}. He observed that for two non-intersecting non-concentric circles there is a projective transformation preserving two circles as sets but changing their centers. Based on that, he claimed that there is no algorithm constructing the centers of circles, since if it existed, it would return the images instead of the actual centers.

Forty years later, C.~Gram~\cite{gram1956aremark} has found an error in the arguments of Cauer. He has also given a construction of the centers in the case when a point on the line connecting the centers is given (see Proposition~\ref{pr:Gram} below), despite the proof of Cauer goes through in this case as well. Gram's proof, in turn, contains a minor mistake, which we discuss and correct in Section~\ref{ss:gram theorem}.

The problem with Cauer's proof is that, in the first instance, to show that a construction algorithm does not exist, one needs a definition of construction algorithm. This question is subtler than it looks, see~\cite{ShenOnHilbertsError}. In particular, a reasonable definition of an algorithm must include the operation of testing whether two lines are parallel or not. This operation is not preserved under projective transformations. In Section~\ref{ss:special cases} we show that there exist infinitely many families of pairs of non-intersecting circles such that the construction of their centers is possible. Therefore Cauer's proof cannot be repaired. In fact, a similar problem occurs in Hilbert's theorem, see Section~\ref{sect:center}.

The main result of the article is the following theorem.
\begin{theorem}\label{thm: centers}
	There exist two circles whose centers cannot be constructed using only a~straightedge.
\end{theorem}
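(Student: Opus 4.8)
The plan is to prove a non-existence result, which means I need a precise notion of "construction algorithm" and then exhibit an obstruction. The natural strategy, suggested by the historical discussion, is to use projective transformations: a straightedge construction is manifestly equivariant under projective transformations of the plane, since straightedge operations (drawing the line through two points, intersecting two lines) commute with projective maps. So if there is a projective transformation $\varphi$ carrying the configuration of two given circles to itself (as a set of two circles, possibly swapping them or preserving them) but moving the centers to points that are \emph{not} the centers, then no straightedge algorithm can locate the centers: applying $\varphi$ to the whole construction yields the same input data but a different claimed output, a contradiction.

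The key subtlety — and the reason Cauer's original argument was flawed — is that a faithful definition of algorithm must allow the operation of testing parallelism, and parallelism is \emph{not} projectively invariant. So I would first fix a careful definition of a straightedge construction: a finite sequence of steps, each either drawing the line through two already-constructed points, marking an intersection point of two already-constructed lines, \emph{and} branching on a test of whether two constructed lines are parallel (i.e. meet on the line at infinity). To defeat this richer model I need a projective symmetry $\varphi$ that is \emph{affine}, or more precisely one that preserves the line at infinity, so that it respects parallelism tests and hence genuinely commutes with the whole algorithm including its branching. Thus the first main task is to find two circles admitting a nontrivial affine (or at least infinity-preserving projective) automorphism that fixes the pair setwise but does not fix the centers.

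The main obstacle is exactly the tension between these two constraints: a generic projective transformation preserving two conics need not preserve the line at infinity, while an affine map preserving two given circles is very rigid and will typically fix their centers. I would therefore first classify the projective transformations preserving a given pair of circles. Two general conics share a pencil, and the transformations preserving both as a set form a finite group acting through the common self-polar triangle / the four intersection points (over $\mathbb{C}$); for two real non-intersecting non-concentric circles one gets a Klein four-group of projective involutions, which is presumably the family Cauer used. I would compute, for a suitable pair of circles, the action of these involutions on the line at infinity and on the centers, and then seek a member of the group — or a conjugate configuration — for which a symmetry both preserves the line at infinity and moves the centers. Equivalently, I would try to arrange the two circles so that one of the symmetry involutions is actually affine (preserves $\ell_\infty$) yet is not a Euclidean isometry, so that it relocates the centers.

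Concretely, I expect the cleanest route is to place two circles symmetric with respect to a line, consider the involution swapping them, and then apply a projective transformation to make a \emph{different} symmetry become infinity-preserving; I would produce an explicit pair of circles and an explicit matrix $\varphi \in \mathrm{PGL}_3$ with $\varphi(\ell_\infty)=\ell_\infty$, $\varphi(\{C_1,C_2\})=\{C_1,C_2\}$, and $\varphi$ moving at least one center. Once such $\varphi$ is exhibited, the proof concludes formally: given any straightedge construction algorithm taking the encoded data of $C_1,C_2$ as input, the transformed input $\varphi(C_1),\varphi(C_2)$ is the \emph{same} configuration, every parallelism test returns the same answer (since $\varphi$ preserves $\ell_\infty$), so the algorithm produces the $\varphi$-image of its original output; but the output was supposed to be a center, and $\varphi$ does not send centers to centers, giving the contradiction. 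I would verify the equivariance step by a short induction on the length of the construction, which is the routine part; the creative part, and the real difficulty, is the explicit construction of an infinity-preserving symmetry of a circle pair that displaces the centers.
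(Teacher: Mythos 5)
Your plan founders at exactly the step you defer to the end, and that step cannot be carried out: the infinity-preserving symmetry you need does not exist. The center of a conic is the pole of the line at infinity $\ell_\infty$ with respect to that conic, and polarity is natural under projective transformations: for any $\varphi\in\mathrm{PGL}_3(\R)$, the pole of $\varphi(\ell)$ with respect to $\varphi(C)$ is the $\varphi$-image of the pole of $\ell$ with respect to $C$. Hence if $\varphi(\ell_\infty)=\ell_\infty$ and $\varphi$ preserves the pair $\{C_1,C_2\}$, then $\varphi$ maps the pair of centers $\{O_1,O_2\}$ to itself---fixing both centers if it fixes each circle, swapping them if it swaps the circles. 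In no case does it move a center to a non-center, so the contradiction you aim for (``same input, but the output is not a center'') can never be produced; at best you get a swap, which is consistent with the algorithm returning the unordered pair of centers. This is not an accident of your normalization: it is precisely why the paper asserts that Cauer's symmetry argument cannot be repaired. Indeed, Section~\ref{ss:special cases} exhibits infinitely many pairs of non-intersecting, non-concentric circles whose centers \emph{can} be constructed with a straightedge alone (via Poncelet's porism combined with Gram's Theorem, Proposition~\ref{pr:Gram}); any argument of your type would apply uniformly to all non-intersecting non-concentric pairs and would therefore contradict these positive results.

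The paper's actual route is entirely different: the obstruction is field-theoretic, not symmetry-based. Theorem~\ref{thm: centers} is deduced from Theorem~\ref{thm: two tangents} (non-constructibility of the common tangents) via the Steiner--Poncelet theorem: if the centers were straightedge-constructible, then, having two circles with their centers, everything compass-and-straightedge constructible---in particular the common tangents---would become straightedge-constructible. Theorem~\ref{thm: two tangents} is proved by choosing a projective transformation $\Pi^{-1}$ that carries the circle $(x-1)^2+(y+1)^2=1$ and the parabola $y=x^2$ to two circles $\omega_1,\omega_2$; the points where the common tangents touch the circle have $x$-coordinates that are roots of the irreducible cubic $t^3-4t^2-2t+4$, hence of degree $3$ over $\Q$, hence not in the field $\G$ of iterated square-root numbers. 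The adversary set $\Sigma=\{a:\Pi(a)\ \text{has coordinates in}\ \G\}$ then satisfies the density and closure conditions of Definition~\ref{def:main} but misses the tangency points, so these (and hence the tangent lines, and hence the centers) are non-constructible. Note also that this degree argument avoids a second weakness of symmetry arguments that the paper points out: the group of projective transformations preserving a two-circle configuration is finite, so even a valid symmetry argument could only show that the center is one of finitely many candidates, and a finite ambiguity can potentially be resolved by further tests (for instance, parallelism tests can recognize the center of a circle). You would need to address that point as well, but the non-existence of your $\varphi$ makes it moot.
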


We will prove this theorem in Section~\ref{sect:main} for any `reasonable' definition of construction algorithm (to be discussed in Section~\ref{sect:non_constr}). In fact, we will provide an example of such circles. This theorem follows as a trivial corollary from the Steiner--Poncelet Theorem discussed above and the following theorem.

\begin{theorem}\label{thm: two tangents}
    There exist two circles, lying one outside the other, such that it is not possible to construct their common tangent lines using only a straightedge.
\end{theorem}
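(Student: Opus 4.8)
The plan is to work projectively and to track the field of coordinates that a straightedge construction can reach when, besides drawing lines and intersecting lines, it is allowed to intersect a constructed line with one of the two drawn circles. First I would fix an explicit pair of circles $C_1, C_2$ lying one outside the other, with centres on a line $\ell$, and isolate the elementary repertoire of a straightedge in the presence of drawn circles: through the pole--polar construction one can draw the tangent to a drawn circle at any already-constructed point lying on it, and intersecting a constructed line with a drawn circle amounts to solving a quadratic, i.e.\ to adjoining to the current field the square root of a discriminant. Consequently every point a construction produces has coordinates lying in a tower $k = k_0 \subset k_1 \subset \dots \subset k_N$ in which each step $k_i \subset k_{i+1}$ adjoins the square root of a discriminant arising from an intersection $\textrm{line}\cap C_j$, where $k$ is the field generated over $\Q$ by the data defining $C_1$ and $C_2$. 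The point of this model is that, unlike a compass, the two given circles supply only a \emph{restricted} supply of square roots, and this is exactly the gap between ``circle with centre'' (full Steiner--Poncelet power) and the bare curves available here.

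Next I would make precise the target of any such construction. Producing the four common tangents is equivalent to producing the two centres of similitude $E$ and $S$ (the common points of the external pair and of the internal pair of tangents, recovered among the six pairwise intersections of the four lines); these determine the line of centres $\ell = ES$, and on $\ell$ the four collinear points $C_1\cap\ell$, $C_2\cap\ell$ together with $E,S$ form a rigid one-dimensional harmonic configuration. Thus it suffices to exhibit circles for which even the line of centres, equivalently the abscissa of $E$, cannot be reached by any tower of the above type. The relevant quantity is a specific algebraic combination of the two radii and the distance between the centres --- a combination that is \emph{not} a projective invariant of the pair of conics, which is why the special nature of ``circle'' enters. (That a construction of the centres would, via the Steiner--Poncelet Theorem, conversely yield the tangents is the separate implication by which Theorem~\ref{thm: centers} follows.)

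The crux, and what I expect to be the main obstacle, is to prove that this square-root datum cannot be produced from the square roots that the two circles make available. I would choose the radii and the inter-centre distance algebraically independent over $\Q$, so that the discriminants arising from $\textrm{line}\cap C_j$ generate a controlled, generic multiquadratic situation, and then use a valuation- or specialization-theoretic argument to show that the combination of radii occurring in the abscissa of $E$ is independent of --- and never becomes a square over --- any field reachable by the permitted intersections. Finally, because a construction algorithm is permitted to branch on non-projective tests such as parallelism (as discussed in Section~\ref{sect:non_constr}), I would argue that for a pair in sufficiently general position all nearby pairs follow the same branch, so that on an open neighbourhood the algorithm is governed by fixed rational-and-square-root formulas of the tower type; the field obstruction then precludes correctness on that neighbourhood, hence for the explicit pair. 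Establishing the independence of the required square root from those the two circles can present is the delicate step, and is precisely where one must exploit that $C_1$ and $C_2$ are circles rather than arbitrary conics.
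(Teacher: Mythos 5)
There is a genuine gap, and it is fatal to the plan rather than a fixable detail: the field-theoretic obstruction you hope to exhibit provably does not exist for honest circles, so the ``delicate step'' you postpone (independence of the required square root from the supplied ones) cannot be carried out. Put your circles at centers $(0,0)$, $(d,0)$ with radii $r_1,r_2$ and let $k=\Q(r_1,r_2,d)$. The external and internal similitude centers are $E=\bigl(\tfrac{r_1d}{r_1-r_2},0\bigr)$ and $S=\bigl(\tfrac{r_1d}{r_1+r_2},0\bigr)$: their coordinates are \emph{rational} over $k$, so your chosen target --- the abscissa of $E$ --- lies in the base field and carries no square-root obstruction at all. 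Worse, the tangency points of the four common tangents are exactly the intersections of $C_j$ with the polar lines of $E$ and $S$, and those polars are lines with coefficients in $k$; hence the square roots needed to write down the common tangents are precisely discriminants of intersections $\mathrm{line}\cap C_j$, i.e.\ they belong to the very ``restricted supply'' your tower model permits. Formalizing your model as in Definition~\ref{def:main}, any dense set of the form $\Sigma=F^2$ ($F$ a subfield of $\R$) satisfying conditions~\eqref{def:main:iii} and~\eqref{def:main:iv} is forced to contain points of both circles, hence (three points on a circle determine its center and radius rationally, up to one more permitted square root) to contain the centers, radii, $E$, $S$, their polars, and finally the tangency points themselves; so condition~\eqref{def:main:v} can never be arranged. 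Choosing $r_1,r_2,d$ algebraically independent changes nothing, and no valuation or specialization argument can rescue this, because the statement it would need is false: for two circles the common-tangent problem is always quadratic over the data, by square roots of exactly the permitted kind.

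The idea you are missing --- the crux of the paper's proof --- is to move the obstruction out of the world of pairs of circles by a projective change of coordinates. The paper chooses a projective map $\Pi$ carrying two suitable circles $\omega_1,\omega_2$ to the circle $\alpha\colon(x-1)^2+(y+1)^2=1$ and the \emph{parabola} $\pi\colon y=x^2$ (possible because $\Pi_{\C}$ can send the cyclic points to two complex conjugate intersection points of $\alpha$ and $\pi$), and takes $\Sigma=\{a:\Pi(a)\in\G^2\}$, where $\G$ is the field of iterated square roots. This $\Sigma$ is still closed under all straightedge operations, since lines meet $\alpha$ and $\pi$ in points quadratic over their coefficients; but the common tangents of a circle and a parabola form a \emph{cubic} problem: the tangency data are roots of the irreducible cubic $t^3-4t^2-2t+4$, which cannot lie in $\G$ because $[\Q(\beta):\Q]=3$ divides no power of $2$. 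Tangency is projectively invariant, so the common tangents of $\omega_1,\omega_2$ are non-constructible. In short, the obstruction is only visible in coordinates where one of the two conics stops being a circle; your plan of staying with two genuine circles and generic real parameters is precisely the setting in which no such obstruction exists. (Your closing paragraph about branching on parallelism tests is also unnecessary in this framework: the adversary set of Definition~\ref{def:main} disposes of that issue once and for all.)
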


The idea of the proof is that, were it possible, it would be also possible to construct common tangents to any two conics as well because any two non-intersecting conics are projectively equivalent to two circles. However, the latter is not possible in general for field theory reasons. We give all the formal definitions in the following sections.

For the case of three circles the construction of the centers is possible if the three circles do not belong to the same pencil. The first proof of this statement, also given by Cauer in~\cite{cauer1912ueber}, was incorrect. The mistake was found by Schur and Mierendorff, their alternative construction appeared in \cite{cauer1913ueber}. In Sections~\ref{sect:threecircles}--\ref{ss:gram theorem} we sketch an algebraic proof of this statement.

\begin{remark}
A. Shen has pointed out that Cauer was aware of a strategy similar to the one we follow to prove Theorem~\ref{thm: centers} (see~\cite{cauer1913ueber}). However, Cauer did not give any details.
\end{remark}

\section{Definition of non-constructible points}\label{sect:non_constr}
As we have already mentioned, to show that a certain point is non-constructible, we need a definition of the geometric construction. It turns out that this question is quite subtle. There are many sources where definitions of geometric constructions are discussed, see e.g.~\cite{manin1963solvability,engeler1968remarks}, and especially~\cite{ShenOnHilbertsError}).

The list of operations in~\cite{engeler1968remarks} includes the operation of checking whether lines are parallel or not (B2) and an operation ``if''\footnote{Constructions in the Mohr--Mascheroni and the Steiner--Poncelet theorems also use this operation.}. On the other hand, this list does not include an operation of ``taking a random point''. Clearly, we need to assume that we can pick a random point, otherwise, starting from two circles we cannot get anywhere. Random points can be chosen in certain open regions or on arcs of curves. Random point on arcs can be defined as the intersection points of the curve with segments connecting two random points in areas near the arc, so we can work with random point in open regions only. The randomness means that any point in the open region will do. Thus, we may assume that there is an adversary playing against us by choosing a dense subset of the plane and requesting that random points are taken from this subset. Indeed, a dense subset intersects any open region.

To avoid a further discussion of algorithms, we define a concept of ``non-constructible'' points. These points cannot be constructed by any reasonable construction algorithm. For completeness, in Section~\ref{sect:further} we give a definition of a ``general algorithm'', and show that point is non-constructible if and only if the general algorithm does not construct it.

\begin{definition}\label{def:main}
Given a configuration of points, lines, and curves $\{a_i,\ell_i,\gamma_i\}$, we say that a point $a\in\R^2$ is \emph{non-constructible} from the configuration by a straightedge, if there is a set $\Sigma\subset\R^2$ satisfying the following properties:
\begin{enumerate}
\item\label{def:main:i} $\Sigma$ is dense in $\R^2$.
\item \label{def:main:ii}  For all $i$ we have $a_i\in\Sigma$.
\item\label{def:main:iii} Given points $b_1,\ldots,b_4\in\Sigma$, the intersection of the lines $b_1b_2$ and $b_3b_4$ is contained in~$\Sigma$, provided these lines are distinct and not parallel.
\item\label{def:main:iv} Given points $b_1,b_2\in\Sigma$, the isolated intersection points of the line $b_1b_2$ with the lines $\ell_i$ and the curves $\gamma_j$ are in $\Sigma$.
\item\label{def:main:v} The point $a$ is not in $\Sigma$.
\end{enumerate}
\end{definition}

Let us clarify, why there is no algorithm constructing the point $a$ from the configuration $\{a_i,\ell_i,\gamma_i\}$, provided that $a$ is non-constructible. Indeed, suppose such an algorithm exists. As explained above, we may assume that random points are always taken from $\Sigma$, thanks to property~\eqref{def:main:i} of~$\Sigma$. An easy inductive argument using properties~\eqref{def:main:iii} and~\eqref{def:main:iv} of $\Sigma$ shows that all the points we construct by our algorithm are contained in $\Sigma$. But this gives a~contradiction with~\eqref{def:main:v}.

\begin{remark}\label{rem:non-constractable defintiion}
(i) Conversely, if $a$ is constructible from $\{a_i,\ell_i,\gamma_i\}$ (that is, not non-constructible), then there is an algorithm constructing $a$ from $\{a_i,\ell_i,\gamma_i\}$; see Section~\ref{sect:further}.

(ii) The set $\{a_i\}$ is not required to contain all intersection points of the lines $\{\ell_i\}$ and the curves $\{\gamma_i\}$; cf.\ Remark~\ref{rm:mainthm}(i).

(iii) In a similar way we can define ``non-constructible'' lines or configurations of lines by substituting part~\eqref{def:main:v} with the requirement that among the set of lines connecting points in~$\Sigma$ we cannot find those forming the needed configuration.

(iv) If we have four point in general position, we can cover the plane with a set of lines whose pairwise intersection points are dense in the plane. Then we can pick our random points from this set (cf.~Section~\ref{ss:midpoint} below). Thus we do not actually need random points in this case. Similarly, if we have a straightedge and a compass, we can get a dense set, starting from any two points.
\end{remark}

\section{Known results}\label{sec:known results}
In this section we list some well-known non-constructibility results and show how to give simple proofs using Definition~\ref{def:main}. In the end of the section we explain how to construct the centers of two intersecting, tangent, or concentric circles. A reader who is only interested in the proofs of our main theorems may skip this section.

\subsection{Midpoint of a segment}\label{ss:midpoint}
The problem of constructing a line parallel to a given line is equivalent to constructing the midpoint of a segment. Indeed, having a midpoint of a~segment, we can construct the line parallel to this segment and passing through a given point; see Figure~\ref{fig:construction of parallel line}.

Conversely, having two parallel lines, we can find a midpoint of any segment parallel to these lines. There is no algorithm solving any of these problems using only a~straightedge. The well-known proof is the following: assume that there is an algorithm constructing the midpoint. Consider a projective transformation $\Pi$ taking the segment to itself but moving the midpoint. Then this transformation preserves the algorithm giving a contradiction. The~gap in this proof was found by V.~J.~Baston and F.~A.~Bostock \cite{baston1990ontheimpossibility}: as explained above, the algorithm can contain testing whether two lines are parallel or not, and the property of being parallel is not invariant under projective transformations. Moreover they noted that if instead of $\R^2$ we consider the rational plane $\Q^2$, then an algorithm exists! Indeed, choose four points in general position and a projective coordinate system such that these points are the vertices of the unit square. Using a construction similar to that of Figure~\ref{fig:construction of parallel line}, it is easy to construct the point with given rational coordinates. Now, using the fact that $\Q^2$ is a countable set, it is not difficult to design an algorithm returning successively all the points of $\Q^2$.

\begin{center}
	\includegraphics{fig-ruler-3.mps}\\
	
	\f \label{fig:construction of parallel line} Construction of a line parallel to the segment $[(0,0),(1,0)]$ using its midpoint.
\end{center}

Baston and Bostock have repaired the proof, but let us show how this can be done using our Definition~\ref{def:main}.

\begin{proposition}
The midpoint of a segment cannot be constructed with only a straightedge.
\end{proposition}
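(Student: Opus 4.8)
The plan is to produce an explicit dense set $\Sigma \subset \R^2$ witnessing non-constructibility in the precise sense of Definition~\ref{def:main}, thereby sidestepping the projective-invariance gap identified by Baston and Bostock. Concretely, I would take the two endpoints of the segment to be $a_1 = (0,0)$ and $a_2 = (1,0)$ (no lines or curves are present in this configuration), and set
\[
  \Sigma = \Q^2 = \{(x,y) : x,y \in \Q\},
\]
with the claim that the midpoint $a = (1/2, 0)$ — or, to make the point cleaner, a midpoint with an irrational coordinate after a suitable choice of segment — is non-constructible from $\{a_1, a_2\}$. Since the stated proposition is about \emph{an arbitrary} segment, it suffices to exhibit one segment whose midpoint is not in the chosen $\Sigma$; the natural move is to place the endpoints at rational points but argue via a $\Q$-rational structure in which the relevant midpoint escapes. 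I would therefore instead take $\Sigma$ to be the set of points both of whose coordinates lie in some subfield $K \subset \R$ with $\Q \subseteq K$, chosen so that $a_1, a_2 \in K^2$ but $a \notin K^2$.

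The key steps, in order, are as follows. First, verify property~\eqref{def:main:i}: $\Sigma = K^2$ is dense in $\R^2$ because $K \supseteq \Q$ is dense in $\R$. Second, verify~\eqref{def:main:ii}: the endpoints $a_1, a_2$ are chosen in $K^2$ by construction. Third — and this is the heart of the matter — verify the closure property~\eqref{def:main:iii}: if $b_1, b_2, b_3, b_4 \in K^2$ determine two distinct non-parallel lines, then their intersection point has coordinates in $K$. This is the classical fact that solving two linear equations with coefficients in a field $K$ (obtained from the point-coordinates via the two-point line formula) produces a solution in $K$, by Cramer's rule; the non-parallel hypothesis guarantees a nonzero determinant, so no division outside $K$ is needed. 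Property~\eqref{def:main:iv} is vacuous here since the configuration contains no lines $\ell_i$ or curves $\gamma_j$. Finally, verify~\eqref{def:main:v}: the midpoint $a$ of the chosen segment is not in $K^2$.

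The main obstacle is the last step: one must engineer a concrete segment with endpoints in $K^2$ whose midpoint fails to lie in $K^2$. Over $\Q^2$ this is impossible for a single segment, since the midpoint of a segment with rational endpoints is always rational — this is exactly why the naive argument needs care. The resolution is to exploit that the \emph{line} $b_1 b_2$ through two points of $K^2$, and hence the whole constructible set from two points, lies on the single line $y = 0$ together with its rational-coordinate structure, so in fact \emph{every} point constructible from just two points lies on the line through them. I would make this observation the crux: with only two initial points and no other data, property~\eqref{def:main:iii} can never be applied (one cannot form two distinct lines from two points), so $\Sigma$ may be taken to be the line $\overline{a_1 a_2}$ itself intersected with any dense subset avoiding the midpoint. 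Thus I set $\Sigma = (\Q \setminus \{1/2\}) \times \{0\} \ \cup\ (\R^2 \setminus \overline{a_1a_2})$ restricted to a dense set, observe that no new points on $\overline{a_1a_2}$ can be produced, and conclude $a = (1/2,0) \notin \Sigma$, establishing non-constructibility.
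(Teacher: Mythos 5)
Your final construction rests on a misreading of Definition~\ref{def:main}. Property~\eqref{def:main:iii} is a closure condition on \emph{all} quadruples of points of $\Sigma$, not only on points of the initial configuration: since $\Sigma$ must be dense by~\eqref{def:main:i}, it contains many points besides $a_1,a_2$, and it must be stable under intersecting lines drawn through \emph{any} of its points (this reflects the fact that an algorithm may request arbitrary random points and then intersect lines through them). So your claim that ``property~\eqref{def:main:iii} can never be applied because one cannot form two distinct lines from two points'' is false, and your set $\Sigma=\bigl((\Q\setminus\{1/2\})\times\{0\}\bigr)\cup D$, with $D$ a dense set off the axis, visibly fails~\eqref{def:main:iii}: take $b_3=a_1$, $b_4=a_2$, and $b_1,b_2\in D$ any two points whose joining line meets the $x$-axis; the intersection is a point of the axis that your $\Sigma$ need not contain, so ``no new points on $\overline{a_1a_2}$ can be produced'' is simply wrong. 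Worse, nothing in your argument prevents the closure of $\{a_1,a_2\}\cup D$ under~\eqref{def:main:iii} from producing the midpoint itself: as soon as that closure contains two points with equal $y$-coordinate (which you have not ruled out, and which is hard to rule out while keeping closure), the classical trapezoid construction --- intersect $a_1 c_2$ with $a_2 c_1$, intersect $a_1 c_1$ with $a_2 c_2$, join the two intersection points, and cut the axis --- yields exactly $(1/2,0)$. Your intermediate subfield idea fails for the reason you yourself note: any subfield $K\supseteq\Q$ containing the coordinates of $a_1,a_2$ contains those of the midpoint, since fields are closed under averaging.

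The missing idea, which is the paper's actual proof, is to take a \emph{projectively distorted} copy of the rational plane: choose a projective transformation $\Pi$ taking the segment to itself but sending the midpoint to a point with an irrational coordinate, and set $\Sigma=\{p\in\R^2:\Pi(p)\in\Q^2\}$ (with $\Pi(p)$ understood projectively, so it may lie on the line at infinity). This $\Sigma$ is dense, contains the endpoints, and is closed under~\eqref{def:main:iii} because $\Pi$ carries lines to lines and lines through rational points of the projective plane meet in rational points; yet the midpoint lies outside $\Sigma$ by the choice of $\Pi$. The content of the proposition is precisely that density plus closure under~\eqref{def:main:iii} is a strong constraint --- for instance it forces the midpoint into $\Sigma$ whenever $\Sigma$ contains a pair of points spanning a line parallel to the segment --- and the projective twist is the device that reconciles density, closure, and avoidance of the midpoint simultaneously. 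Your proposal never produces a set satisfying all five conditions at once, so the non-constructibility claim is not established.
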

\begin{proof}
Without loss of generality, we may assume that the segment is $[(0,0),(1,0)]$ (otherwise, make an affine coordinate change). Let us take a projective transformation $\Pi$ such that $\Pi$ takes the segment to itself and the midpoint $(0,1/2)$ to a point $(0,x)$, where $x$ is irrational. Let $\Sigma$ be the set of points $a\in\R^2$ such that $\Pi(a)$ has rational coordinates\footnote{$\Pi(a)$ may be a point on the line at infinity; cf.~the proof of Theorem~\ref{thm: two tangents} below.}. Clearly, $\Sigma$ satisfies the conditions of Definition~\ref{def:main}. Since the midpoint is not in $\Sigma$, it cannot be constructed from the configuration $\{(0,0),(1,0)\}$.
\end{proof}

This proof also shows that for any field $\mathbb F$ strictly containing $\Q$, one cannot construct the midpoint of a segment.

\bigskip

By the same reason as in the midpoint problem, for the next two problems the standard proofs based only on a projective transformation are not completely correct. Indeed, for the rational plane $\Q^2$ the next two problems have positive solutions as well.

\subsection{Center of a circle}\label{sect:center}
Now we give a (first correct?) proof of a well-known theorem of Hilbert.
\begin{proposition}[Hilbert]\label{prop:hilbert}
Given a circle it is not possible to construct its center.
\end{proposition}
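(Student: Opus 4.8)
The plan is to follow exactly the template established by the midpoint proof, since the structure of Definition~\ref{def:main} reduces non-constructibility to the existence of a single set $\Sigma$. The key observation, already emphasized in the introduction, is that the circle admits a three-parametric family of projective transformations preserving it as a set but moving its center. So first I would fix a convenient circle, say the unit circle $\gamma=\{x^2+y^2=1\}$ with center $(0,0)$, and exhibit a projective transformation $\Pi$ that maps $\gamma$ onto itself (as a set) while sending the center $(0,0)$ to some point $(x_0,y_0)$ with irrational coordinates. Concretely, one can take a hyperbolic automorphism of the disk: in terms of the standard identification of the circle with a conic, the projective transformations preserving $\gamma$ form a copy of $\mathrm{PO}(2,1)$, and a ``boost'' along one axis fixes $\gamma$ setwise but pushes the center off to an interior point with transcendental, hence irrational, coordinates. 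The main thing I must verify here is that such a $\Pi$ genuinely preserves the circle as a point set and that the image of the center is not rational.

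Once $\Pi$ is in hand, I would define
\[
\Sigma=\{\,a\in\R^2 : \Pi(a)\text{ has rational coordinates}\,\},
\]
with the same caveat as before that $\Pi(a)$ may land on the line at infinity. The point is then to check the five conditions of Definition~\ref{def:main} for the configuration consisting of the single curve $\gamma$ together with whatever points encode it (in the Steiner--Poncelet setup a circle is given by its center and a point on it, but here we are proving the center is \emph{not} given, so the configuration is just the curve $\gamma$ and perhaps one rational point on it). Density~\eqref{def:main:i} holds because $\Pi$ is a homeomorphism off the exceptional line and rational points are dense; the incidence-closure properties~\eqref{def:main:iii} and~\eqref{def:main:iv} hold because $\Pi$ is a projective transformation, hence carries lines to lines and preserves incidences, so it conjugates the straightedge operations to operations performed on the rational configuration $\Pi(\Sigma)$; and condition~\eqref{def:main:iv} in particular needs that $\Pi$ maps $\gamma$ to itself so that intersections of lines with $\gamma$ are sent to intersections of lines with $\gamma$, keeping them inside $\Sigma$.

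The one genuinely delicate point, just as in the midpoint argument, is condition~\eqref{def:main:iv} applied to the curve $\gamma$: I must confirm that the \emph{isolated} intersection points of a line $b_1b_2$ (with $b_1,b_2\in\Sigma$) with the circle again lie in $\Sigma$. This follows because $\Pi$ sends the line $b_1b_2$ to the line $\Pi(b_1)\Pi(b_2)$ joining two rational points, and sends $\gamma$ to $\gamma$; hence it carries the intersection points to the intersection points of a rational line with the rational circle $\gamma$. Those are algebraic of degree at most $2$ over $\Q$, so they need \emph{not} be rational, which at first looks fatal. The resolution is the same subtlety Definition~\ref{def:main} was designed to accommodate: I should instead let $\Sigma$ be the set of $a$ with $\Pi(a)$ having coordinates in a field $\mathbb F$ that is closed under the relevant quadratic extensions but still excludes the transcendental coordinates of the displaced center. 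Taking $\mathbb F$ to be the field of all real algebraic numbers works, since it is countable (hence its preimage is not all of $\R^2$), dense, closed under solving the quadratics produced by line--circle incidences, and does not contain a suitably transcendental image of the center. Verifying that the center's image under $\Pi$ can be chosen transcendental, so that $(0,0)\notin\Sigma$, is then the crux and the step I expect to require the most care.
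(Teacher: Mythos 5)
Your proof is correct and follows essentially the same route as the paper: the paper also defines $\Sigma$ as the preimage under a circle-preserving projective transformation $\Pi$ of the points with coordinates in a countable field closed under the straightedge and line--circle operations, the only difference being that it uses the field $\G$ of constructible numbers (rationals closed under field operations and square roots) and asks that the center's image lie outside $\G$, where you use the real algebraic numbers and ask for a transcendental image. The step you flag as the crux is in fact easy: the projective automorphisms of the circle act transitively on the open disk, so $\Pi$ can send the center to any interior point, e.g.\ one with a transcendental coordinate.
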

The usual proof of this theorem is to notice that there is a projective transformation preserving the circle but moving the center. This proof contains the same mistake as Cauer's proof and the standard proof of non-constructibility of midpoint of a segment from the previous section: projective transformations neither preserve parallelity, nor preserve order of points on lines.
\begin{proof}[Proof of Proposition~\ref{prop:hilbert}]
Consider the coordinate system such that our circle is the unit circle. Let $\G$ be the subfield of complex numbers consisting of all numbers that can be obtained from rational numbers via iterated application of field operations and extraction of square roots. Let $\Pi$ be a projective transformation preserving a circle but moving its center to a point whose coordinates are not in $\G$. Then we take $\Sigma$ to be the set of points $a$ such that the coordinates of $\Pi(a)$ are in $\G$. We see that, according to Definition~\ref{def:main}, the center cannot be constructed.
\end{proof}
\begin{remark}
Note that in Cauer's case, there is only one projective transformation preserving the two circles but moving the centers. If we were able to reduce the possibilities for the centers to finite number of points, we could easily decide, which point is actually the center.
In Hilbert's case there are uncountably many transformations preserving the circle but moving the center. It seems that if we have a configuration and an uncountable family of projective transformation preserving the configuration but moving a certain point, then this point cannot be constructed (use arguments similar to~\cite[Sect.~4]{ShenOnHilbertsError}). On the other hand, we do not know if there is a naive way to repair Hilbert's proof without using the notion of non-constructible point (our Definition~\ref{def:main}).
\end{remark}

\subsection{Perpendicular lines}\label{sect:perp}
\begin{proposition}
It is not possible to construct a pair of perpendicular lines using only a straightedge.
\end{proposition}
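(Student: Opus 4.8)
The plan is to follow the strategy of the two previous proofs and exhibit an explicit dense set $\Sigma\subset\R^2$ playing the role of Definition~\ref{def:main}, using the variant for configurations of lines described in Remark~\ref{rem:non-constractable defintiion}(iii). There is no initial configuration here (we only use random points), so it suffices to produce a $\Sigma$ that is dense in $\R^2$, closed under the straightedge operation~\eqref{def:main:iii} (item~\eqref{def:main:iv} being vacuous, as there are no distinguished lines or curves), and such that no two of the lines joining pairs of points of $\Sigma$ are perpendicular. As before, I would take $\Sigma=\Pi^{-1}(\Q^2)$ for a suitable \emph{linear} transformation $\Pi$ with matrix $M$. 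Density is immediate because $\Pi^{-1}$ is a homeomorphism and $\Q^2$ is dense, and closure under~\eqref{def:main:iii} is routine: $\Pi$ carries a line through two points of $\Sigma$ to a line through two rational points, the intersection of two rational lines is again rational, and applying $\Pi^{-1}$ returns a point of $\Sigma$.

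The entire content is therefore in choosing $\Pi$ so that no two $\Sigma$-lines are orthogonal. In the coordinates provided by $\Pi$, the lines joining points of $\Sigma$ are exactly the lines with rational direction vector $w\in\Q^2$ (including the horizontal and vertical directions $(1,0)$ and $(0,1)$). A line with $\Pi$-direction $w$ has Euclidean direction $M^{-1}w$, so two such lines are perpendicular precisely when $w_1^{\top}S\,w_2=0$, where $S=(MM^{\top})^{-1}$ is the symmetric positive-definite Gram matrix of the Euclidean inner product read off in the $\Pi$-coordinates. Thus I need a symmetric positive-definite $S$ whose bilinear form never vanishes on a pair of nonzero rational vectors. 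Writing $S=\begin{pmatrix}p&q\\ q&r\end{pmatrix}$, I would keep $q\in\Q^{\times}$ but choose $p,r$ so that $1,p,r$ are linearly independent over $\Q$ (and $p>0$, $pr>q^2$ to keep $S$ positive definite); the concrete choice $p=\sqrt2,\ q=1,\ r=\sqrt3$ will do.

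The hard part is exactly this last arithmetic condition, and it is instructive that the naive choice fails: an anisotropic scaling (diagonal $S$) distorts generic angles but keeps the coordinate axes perpendicular, so vertical and horizontal $\Sigma$-lines would still form a forbidden perpendicular pair. The off-diagonal term $q$ together with the independence of $1,p,r$ over $\Q$ is what removes every such coincidence. To verify it, given a nonzero rational $w_1=(\alpha,\beta)$ I would examine $S w_1=(p\alpha+q\beta,\,q\alpha+r\beta)$ and ask when it is proportional to a rational vector, i.e.\ when the ratio of its coordinates is rational: a short computation shows that a rational value $m$ of this ratio forces $p=(m-c)+(mc)\,r$ with $c=\beta/\alpha\in\Q$, expressing $p$ as a rational combination of $1$ and $r$ and contradicting the independence of $1,p,r$, while the degenerate cases $\alpha=0$ or a vanishing coordinate are ruled out directly because $p$ and $r$ are irrational. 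Hence no $S$-orthogonal rational vector $w_2$ exists, no two $\Sigma$-lines are perpendicular, and by Remark~\ref{rem:non-constractable defintiion}(iii) a pair of perpendicular lines is non-constructible.
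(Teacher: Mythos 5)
Your proposal is correct and takes essentially the same approach as the paper: there too $\Sigma$ is the image of $\Q^2$ under an invertible affine map (namely $(x,y)\mapsto(x,\,x+\sqrt3\,y)$), and perpendicular pairs are excluded by an irrationality argument---all slopes of $\Sigma$-lines lie in $1+\Q\sqrt3$, so no product of two such slopes can equal $-1$. Your Gram-matrix formulation with $p=\sqrt2$, $q=1$, $r=\sqrt3$ is the same transport-of-$\Q^2$ idea, merely organized through the bilinear form instead of slopes (and using two irrationalities and their $\Q$-linear independence where the paper gets by with $\sqrt3$ alone).
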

\begin{proof}
Take $\Sigma=\Pi(\Q^2)$, where $\Pi$ is the affine transformation taking $(x,y)$ to $(x,x+\sqrt3y)$. We leave it to the reader to prove that no two lines connecting points of $\Sigma$ are perpendicular (cf.~Remark~\ref{rem:non-constractable defintiion}(iii)). (Hint: the slope of such a line belongs to $1+\Q\sqrt3$, the product of two numbers in this set cannot be equal to $-1$).
\end{proof}

\subsection{Intersection of a line and a circle} Can one construct the intersection of a line and a circle, if the line is given, but only $100$ points are given on the circle? If the line passes through one of the points, then the answer is ``yes'' by Pascal's Theorem. In general the answer is ``no'' because the line can have a rational equation and points on the circle can have rational coordinates, but the intersection of the line and the circle can have irrational coordinates. The example of this situation is the unit circle with $100$ rational points on it and the line $y=x$, whose intersection with the circle consists of two points $\pm\left(\frac{\sqrt2}2, \frac{\sqrt2}2\right)$. In this case, one takes $\Sigma=\Q^2$ in Definition~\ref{def:main}.

\subsection{Intersecting and concentric circles}\label{sect:intcircles}
It was noted in Cauer's paper that if two circles intersect (including the case of touching circles) or are concentric, then it is possible to construct their centers. The construction of the centers of two intersecting circles is based on a simple observation shown on Figure~\ref{fig:parallel lines}: we can construct two pairs of parallel lines and therefore find the midpoints of the chords they cut from the circles. Then it is not difficult to construct two pairs of diameters of circles and find their centers. The construction for tangent circles is similar.

For a pair of concentric circles the construction is shown on Figure~\ref{fig:concentric circles}. It is based on the fact that, given a circle, we can construct a tangent line to this circle through any point on or outside the circle. We refer to the book of A.~S.~Smogorzhevskii~\cite[\S19]{smogorzhevskiui1961ruler} for detailed explanations.

\parbox[b]{8cm}{
\begin{center}
	\includegraphics{fig-ruler-4.mps}\\
	
	\f \label{fig:parallel lines} Construction of two parallel lines from two intersecting circles.
\end{center}}
\parbox[b]{8cm}{\begin{center}
	\includegraphics{fig-ruler-5.mps}\\
	
	\f \label{fig:concentric circles} Sketch of construction of the common center of two concentric circles.
\end{center}}

\section{Main argument}\label{sect:main}

\subsection{Proof of Theorem \ref{thm: two tangents}}
Consider the circle $\alpha$ given by the equation $(x-1)^2+(y+1)^2=1$ and the parabola $\pi$ given by $y=x^2$. We view our plane $\R^2$ as the subset of the projective plane $\RP$ given in standard projective coordinates $(X:Y:Z)$ by the equation $Z\ne0$. Let $\Pi:\RP\to\RP$ be a projective transformation, whose complexification $\Pi_{\C}:\CP\to\CP$ takes the two cyclic points $(1:\pm\sqrt{-1}:0)$ to complex conjugate intersection points of the circle and the parabola. Recall that an infinite real conic is a circle if and only if it passes through the cyclic points. Thus $\Pi^{-1}$ takes the circle $\alpha$ and the parabola $\pi$ to two circles; denote them by $\omega_1$ and $\omega_2$ respectively. We claim that the common tangents to these two circles cannot be constructed by a straightedge.

Recall that $\G$ denotes the subfield of the field of complex numbers consisting of all numbers that can be obtained from rational numbers via iterated application of field operations and extraction of square roots.

One common tangent line to $\alpha$ and $\pi$ is the $x$-axis $y=0$. Consider the points $a_1$, $a_2$, and~$a_3$ where the remaining three common tangent lines of $\alpha$ and $\pi$ touch $\alpha$.
\begin{lemma}
    The $x$-coordinates of $a_1$, $a_2$, and $a_3$ do not belong to $\G$.
\end{lemma}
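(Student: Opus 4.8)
The plan is to reduce the whole statement to a single degree computation over $\Q$ and then invoke the standard fact that any algebraic number lying in $\G$ has degree a power of $2$ over $\Q$ (because $\G$ is a union of towers of quadratic extensions of $\Q$, so $[\Q(x):\Q]$ divides $2^k$ for any $x\in\G$). First I would parametrize the tangent lines of the parabola $\pi$: the tangent at $(t,t^2)$ is $y=2tx-t^2$, i.e.\ $2tx-y-t^2=0$. Requiring this line to be tangent to $\alpha$ (center $(1,-1)$, radius $1$) means its distance from the center equals $1$, which after clearing the square root gives $(2t+1-t^2)^2=4t^2+1$, equivalently
\[
t\,(t^3-4t^2-2t+4)=0.
\]
The factor $t$ records the $x$-axis $y=0$; the remaining three common tangents correspond to the roots $t_1,t_2,t_3$ of the cubic $p(t)=t^3-4t^2-2t+4$.

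Next I would check that $p$ is irreducible over $\Q$: by the rational root test the only candidates are $\pm1,\pm2,\pm4$, and direct substitution shows none is a root, so $p$ is irreducible and $[\Q(t_i):\Q]=3$ for each $i$. I would then locate the point $a_i$ where the $i$-th tangent touches $\alpha$ as the foot of the perpendicular from $(1,-1)$ to the line $2t_ix-y-t_i^2=0$. Using the tangency relation $(2t_i+1-t_i^2)^2=4t_i^2+1$ to simplify, the $x$-coordinate collapses to the rational expression
\[
x_i=\frac{1-t_i^2}{1+2t_i-t_i^2}\in\Q(t_i).
\]
The denominator is nonzero at $t_i$: $1+2t-t^2=0$ forces $t=1\pm\sqrt2$, which has degree $2$, not $3$, over $\Q$.

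The hard part will be showing that passing from $t_i$ to $x_i$ does not lose degree, i.e.\ that $[\Q(x_i):\Q]=3$ and not $1$. For this I would clear denominators in the displayed formula to obtain
\[
(1-x_i)\,t_i^2+2x_i\,t_i-(1-x_i)=0,
\]
a genuine quadratic relation for $t_i$ over $\Q(x_i)$ (its leading coefficient $1-x_i$ is nonzero, since $x_i=1$ would force $t_i=0$, whereas $p(0)=4\neq0$). Hence $[\Q(t_i):\Q(x_i)]\le 2$. Combining this with $[\Q(t_i):\Q]=3$ and the tower law $[\Q(t_i):\Q]=[\Q(t_i):\Q(x_i)]\,[\Q(x_i):\Q]$, and using that $3$ is prime while the first factor is at most $2$, forces $[\Q(t_i):\Q(x_i)]=1$ and therefore $[\Q(x_i):\Q]=3$. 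Since $3$ is not a power of $2$, no $x_i$ can lie in $\G$, which is exactly the claim. The irreducibility of $p$ and the coordinate computation are routine; the only delicate point is this final degree bookkeeping, which works out cleanly precisely because $3$ is prime and the relative degree is bounded by $2$.
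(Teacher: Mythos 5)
Your proposal is correct, and it is in fact \emph{more} careful than the paper's own proof. Both arguments begin identically: parametrize the common tangents by the point of tangency $(t,t^2)$ on the parabola, impose tangency to $\alpha$ (the paper via the vanishing discriminant of the quadratic for the intersection abscissas, you via the distance from the center --- the same cubic $P(t)=t^3-4t^2-2t+4$ results), check irreducibility by the rational root test, and invoke the fact that every element of $\G$ has degree a power of $2$ over $\Q$.

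The divergence is in what the roots of $P$ actually are, and here your extra work matters. The paper asserts that the roots of $P(t)$ \emph{are} the $x$-coordinates of $a_1$, $a_2$, $a_3$, but this is not literally true: the roots $t_i$ are the abscissas of the points where the tangents touch the \emph{parabola} $\pi$, whereas $a_1,a_2,a_3$ were defined as the points where they touch the \emph{circle} $\alpha$. (Numerically, two roots of $P$ are approximately $4.25$ and $-1.10$, which do not even lie in $[0,2]$, the range of abscissas of points of $\alpha$.) Your proposal supplies precisely the missing step: the tangency point on $\alpha$ is the foot of the perpendicular from the center, giving $x_i=\frac{1-t_i^2}{1+2t_i-t_i^2}\in\Q(t_i)$, and then the quadratic relation $(1-x_i)t_i^2+2x_it_i-(1-x_i)=0$ over $\Q(x_i)$, the tower law, and the primality of $3$ force $[\Q(x_i):\Q]=3$, hence $x_i\notin\G$. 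Without this degree bookkeeping one only obtains the lemma for the tangency points on the parabola --- which, to be fair, would also suffice for Theorem~\ref{thm: two tangents} after redefining $a_i$ and $b_i$ to lie on $\pi$ and $\omega_2$ respectively --- but as a proof of the lemma as stated, your version is the correct one: it repairs a genuine, if ultimately harmless, slip in the paper.
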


Let us finish the proof of the theorem, assuming the lemma. Let our configuration consist of the circles $\omega_1$ and $\omega_2$ (and there are no points and lines in the configuration). Define the set $\Sigma$ as the set of points $a\in\R^2$ such that the coordinates of $\Pi(a)$ are in $\G$. (The coordinates of a point in~$\RP$ are defined up to scaling, so when we say that the coordinates are in $\G$, we mean that they are in $\G$ after appropriate scaling.)

It is easy to see that $\Sigma$ satisfies conditions~\eqref{def:main:i}--\eqref{def:main:iii} of Definition~\ref{def:main}. To check condition~\eqref{def:main:iv} we note that for points $b_1,b_2\in\Sigma$, the line $\Pi(b_1b_2)$ has an equation with coefficients in $\G$, so its intersection points with $\alpha$ and $\pi$ have coordinates also in $\G$.

Consider the points $b_1$, $b_2$, and $b_3$, where the common tangents to $\omega_1$ and~$\omega_2$ are tangent to~$\omega_1$. Their images under $\Pi$ are the points $a_1$, $a_2$, and $a_3$. Now it follows from the above lemma that $b_1$, $b_2$, and $b_3$ are not in $\Sigma$. According to Definition~\ref{def:main}, $b_1$, $b_2$, and $b_3$ are non-constructible from the configuration $\{\omega_1,\omega_2\}$. Thus, the common tangents cannot be constructed as well.

\begin{proof}[Proof of the lemma]
Consider a line $\ell_x$ with equation $y=t^2+2t(x-t)$, which is tangent to the parabola~$\pi$ at the point $(t,t^2)$.
The $x$-coordinates of intersections of this line and the circle $\alpha$ are solutions of the equation
\begin{equation*}
	1=(x-1)^2+(t^2+2t(x-t)+1)^2=(4t^2+1)x^2-(4t(t^2-1)+2)x+(t^2-1)^2+1.
\end{equation*}
The line $\ell_x$ tangents $\alpha$ if and only if the determinant of this quadratic equation is equal to $0$:
\begin{equation*}
D(t)=(4t(t^2-1)+2)^2-4(4t^2+1)(t^2-1)^2=-4t(t^3-4t^2-2t+4)=0.
\end{equation*}
Thus the $x$-coordinates of the points $a_1$, $a_2$, and $a_3$ are the roots of the polynomial $P(t)=t^3-4t^2-2t+4$. We claim that $P(t)$ has no rational roots. Indeed, this polynomial is monic, therefore all its rational roots should be integer and divide 4. It remains to check that $\pm1$, $\pm2$, and $\pm4$ are not roots of the polynomial (the roots are equal approximately $0.85$, $4.25$, and $-1.10$).

It remains to apply the following well-known claim, whose proof we give for completeness, as we cannot find a reference.
\begin{claim*}
    If the roots of a cubic polynomial having rational coefficients are irrational, then they do not belong to $\G$.
\end{claim*}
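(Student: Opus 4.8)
The plan is to combine two observations: that a cubic with rational coefficients and no rational root is irreducible over $\Q$, and that every element of $\G$ has degree a power of two over $\Q$. Since $3$ is not a power of two, these together give the claim.

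First I would show that the cubic, call it $P$, is irreducible over $\Q$. A cubic that factors nontrivially over a field must have a linear factor, since its only possible factorization types are an irreducible cubic, a linear factor times an irreducible quadratic, or three linear factors. A linear factor over $\Q$ would produce a rational root, contradicting the hypothesis that all roots are irrational. Hence $P$ is irreducible, and for any root $\alpha$ we have $[\Q(\alpha):\Q]=3$.

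The heart of the argument is the following field-theoretic fact: if $\alpha\in\G$, then $[\Q(\alpha):\Q]$ is a power of $2$. By the definition of $\G$, any $\alpha\in\G$ lies in a field $F_n$ reached by a finite tower $\Q=F_0\subseteq F_1\subseteq\cdots\subseteq F_n$ in which each step has the form $F_{i+1}=F_i(\sqrt{d_i})$ with $d_i\in F_i$; thus $[F_{i+1}:F_i]\in\{1,2\}$. By the tower law, $[F_n:\Q]=\prod_i[F_{i+1}:F_i]$ is a power of $2$. Since $\Q(\alpha)\subseteq F_n$, multiplicativity of degrees gives $[F_n:\Q]=[F_n:\Q(\alpha)]\,[\Q(\alpha):\Q]$, so $[\Q(\alpha):\Q]$ divides $[F_n:\Q]$ and is therefore itself a power of $2$.

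Finally, were some root $\alpha$ of $P$ to lie in $\G$, the previous paragraph would force $[\Q(\alpha):\Q]$ to be a power of $2$, contradicting the value $[\Q(\alpha):\Q]=3$ obtained in the first step. Hence no root of $P$ belongs to $\G$. The main obstacle is the degree-is-a-power-of-two lemma: the delicate point is that the tower need not be Galois over $\Q$, so one cannot argue via Galois groups; instead the conclusion rests purely on the multiplicativity of field degrees together with the fact that each quadratic extraction contributes a factor of at most $2$.
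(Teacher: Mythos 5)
Your proof is correct and follows essentially the same route as the paper's: irreducibility of the cubic gives $[\Q(\alpha):\Q]=3$, membership in $\G$ places $\alpha$ in a field of degree $2^n$ over $\Q$, and multiplicativity of degrees yields the contradiction. You simply spell out two steps the paper leaves implicit (why a rootless cubic is irreducible, and why elements of $\G$ lie in a tower of quadratic extensions), which is a reasonable amount of added detail rather than a different argument.
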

\begin{proof}
	If a cubic polynomial has no rational roots, then it is irreducible. Thus for any root~$\beta$ we have $[\Q(\beta):\Q]=3$, where $[\Q(\beta):\Q]$ is the degree of the extension (see e.g.~\cite[\S13]{DummitFooteAlgebra}). However, if $\beta\in\G$, then~$\beta$ would belong to an extension $F\supset\Q$ such that $[F:\Q]=2^n$ for some $n\in\Z_+$. We get a contradiction with $[F:\Q]=[F:\Q(\beta)][\Q(\beta):\Q]$. The proof of the claim is complete.
\end{proof}

The claim completes the proof of the lemma.
\end{proof}

\begin{remark}\label{rm:mainthm}
(i) Assume that we have a configuration of two intersecting circles, but we do not have the intersection points (cf.~Remark~\ref{rem:non-constractable defintiion}(ii)). Then, similarly to the above, it is easy to give an example where the centers (and thus the intersection points) cannot be constructed. In other words, if we erase small neighbourhoods of intersection points, then the centers cannot be in general constructed. On the other hand, given a small arc of a circle one can construct the intersection of the circle with any line using a straightedge only, cf.~\cite{mordouhay-boltovskoy1910Ogeometricheskih}.

\end{remark}

\subsection{Special cases}\label{ss:special cases}
We have proved that for \emph{some\/} circles their centers cannot be constructed. However, for \emph{many\/} pairs of circles they can be constructed. We have already seen in Section~\ref{sect:intcircles} that we can construct the centers of intersecting, tangent, or concentric circles. We are going to give more examples now. By Gram's Theorem (see Proposition~\ref{pr:Gram}) it is enough to find one point on the line connecting the centers. For some cases this point can be constructed with the help of Poncelet's Porism. Let us recall the statement we need: \emph{if a polygon is inscribed in a circle~$\omega_1$ and circumscribes a circle $\omega_2$, then for any point $a$ on $\omega_1$ there is a polygon with the same number of sides with vertex at~$a$, inscribed in $\omega_1$ and circumscribed around $\omega_2$. Moreover, if the number of sides is even, then the main diagonals intersect at a fixed point, lying on the line connecting the centers of $\omega_1$ and $\omega_2$; see~\cite[\S IV.8]{BergerLadder}}.

We see that if there is a polygon inscribed in one circle and circumscribing the other circle, and this polygon has even number of sides, then the centers of the circles can be constructed. An example of quadrilaterals is shown on Figure~\ref{fig:poncelet circles}(left). Note that this also works for self-intersecting polygons (a.k.a.~polylines), and the word ``circumscribed'' should be understood in the generalized sense, see Figure~\ref{fig:poncelet circles}(right).

\begin{remark}
There are many more families of pairs of circles for which construction of their centers is possible. For example, it can be shown that if a polyline (not necessarily closed) rotates between circles $\omega_1$ and~$\omega_2$, then the intersections of its sides and diagonals move along conics. Moreover, using only a straightedge it is possible to find intersections of these conics with any line on the plane. Now we can consider polylines, circumscribed and inscribed between any pair of these conics, and generate more conics, and so on. One can show that all these conics have a common axis of symmetry---the line connecting the centers of the circles. If between some pair of these conics one can ``circum-inscribe'' a closed polyline with even number of edges, then its main diagonals intersect on this axis, that is, on the line connecting the centers of $\omega_1$ and $\omega_2$. Therefore we can construct these centers by Gram's Theorem. Note that we could get even a larger family of conics by considering dual images of these conics with respect to each other. We could also consider polylines touching not a fixed conic, but different conics from a pencil.

We do not know of a good way to describe all pairs of circles coming from this construction. We also do not know, if there are pairs of circles for which the centers can be constructed, not coming from the construction of this remark.
\end{remark}

\begin{center}
	\includegraphics{fig-ruler-6.mps}\hskip 1cm
	\includegraphics{fig-ruler-7.mps}\\
	
	\f \label{fig:poncelet circles} Pairs of circles for which exists ``circum-inscribed'' quadrilaterals. Their diagonals intersect on the line connecting the centers.
\end{center}

\subsection{Pencil of circles}\label{ss: pencil of circles}
If we are given many circles from a single pencil, then, in general, we cannot construct the common tangents. To see it, consider the circles $\omega_1$ and $\omega_2$ from the proof of Theorem~\ref{thm: two tangents}. Now consider any number of circles from the pencil of $\omega_1$ and $\omega_2$ such that after the projective transform $\Pi$ these circles map to a conic with equation obtained as a linear combination of equations of $\alpha$ and $\pi$ with coefficients in $\G$. Repeating the argument in the proof of Theorem~\ref{thm: two tangents}, we see that we cannot construct the common tangents to $\omega_1$ and~$\omega_2$.

\subsection{Three circles not from the same pencil}\label{sect:threecircles}
On the other hand, if we are given three circles not from one pencil, then their centers can be constructed, as shown in~\cite{cauer1913ueber}. Let us sketch an algebraic argument. Suppose we are given circles $\omega$, $\alpha_1$ and $\alpha_2$ not from the same pencil. Choose any point $o$ inside $\omega$ and a line $\ell$ through $o$. Consider a system of projective coordinates such that~$o$ is the origin and the center of $\omega$, and $\ell$ is the $x$-axis. Note that in this coordinate system $\alpha_1$ and $\alpha_2$ are two conics. Now, by the Steiner--Poncelet Theorem we may assume that in addition we have a compass, and therefore we can construct the $y$-axis and find the coefficients of the equations of $\alpha_1$ and $\alpha_2$. Let $P_i=0$ ($i=1,2$) be the degree four equation for the $x$-coordinates of the complex intersection points of $\omega$ and $\alpha_i$. Since all three circles pass through two cyclic points, $P_1$ and $P_2$ have a quadratic common factor ($P_1$ and $P_2$ are not proportional because the circles are not from the same pencil). This factor can be found by Euclidean algorithm. By factorization, solving $P_1$ and $P_2$ reduces to solving quadratic equations. Since all these operations can be performed with a compass and a straightedge, we can construct the complex coordinates (that is, their real and imaginary parts) of the two non-cyclic intersection points of $\alpha_1$ and $\omega$. Now we can repeat the ``real construction'' from Section~\ref{sect:intcircles} for the non-intersecting circles $\alpha_1$ and $\omega$, working formally with complex coordinates of their intersection points.

\subsection{Algebraic proof of Gram's Theorem}\label{ss:gram theorem}
Now we give an algebraic proof of Gram's theorem, explaining why having a point on the central line is crucial to the problem. Then we discuss and correct a gap in the original Gram's proof.

\begin{proposition}[Gram's Theorem]\label{pr:Gram}
Given two circles and a point on the line connecting centers of the circles, the centers can be constructed with the straightedge only.
\end{proposition}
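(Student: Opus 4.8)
The plan is to first recover the entire line of centers $\ell$ from the single given point $p$ on it, then to reduce the construction of each center to the construction of one further point --- the point at infinity in the direction of $\ell$ --- and finally to produce that point by using both circles at once.

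First I would extract everything I can from $p$. Since the pole--polar correspondence with respect to a drawn conic is realizable by the straightedge alone (draw two secants through $p$ and intersect the cross-joins of the resulting complete quadrangle), I can construct the polar $g_1$ of $p$ with respect to $\omega_1$ and the polar $g_2$ with respect to $\omega_2$. As $p$ lies on the line joining the two centers, each $g_i$ is perpendicular to that line; hence $g_1$ and $g_2$ are parallel, and they meet at the point at infinity $q$ in the direction \emph{orthogonal} to $\ell$. Taking the polar of $q$ with respect to either circle --- i.e.\ the diameter conjugate to the direction $q$ --- then recovers $\ell$ itself (it must pass through $p$, which is a useful consistency check), and intersecting $\ell$ with the circles produces the four collinear points $A_i,B_i=\ell\cap\omega_i$, the endpoints of the two diameters lying along $\ell$.

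Next I would reduce to a single missing datum. Each center $O_i$ is the midpoint of $A_iB_i$, that is, the harmonic conjugate of the point at infinity $\ell_\infty$ of $\ell$ with respect to $A_i,B_i$; once $\ell_\infty$ is available, this harmonic conjugate is a pure straightedge construction via a complete quadrilateral, and we are done. Conversely, the only affine power we have gained so far is the direction $q$, which lets us bisect segments \emph{perpendicular} to $\ell$ but not along it. Thus the whole theorem reduces to producing $\ell_\infty$, equivalently a second point at infinity transverse to $q$. It is worth noting that this is exactly where a single circle is hopeless: one circle together with one direction still admits projective symmetries moving its center (this is Hilbert's phenomenon, Proposition~\ref{prop:hilbert}), so \emph{both} circles must genuinely enter.

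The hard part, then, is to construct $\ell_\infty$ from the two circles and the direction $q$. Here I would work in the coaxial pencil spanned by $\omega_1$ and $\omega_2$: by the Desargues involution theorem its members cut $\ell$ in the pairs of an involution $\sigma$, and the two pairs $(A_1,B_1)$ and $(A_2,B_2)$ already determine $\sigma$; its double points $E',I'$ (the limiting points of the pencil) are then straightedge-constructible, using one of the circles itself as the auxiliary conic in the classical involution construction. The degenerate member of the pencil splits as the line at infinity together with the radical axis $\rho$, which exhibits $\bigl(\ell_\infty,\,\rho\cap\ell\bigr)$ as the remaining pair of $\sigma$; so $\ell_\infty$ is the harmonic conjugate of $F:=\rho\cap\ell$ with respect to $E',I'$, where $\rho$ is the perpendicular bisector of $E'I'$ and $F$ its midpoint. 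I expect the genuine obstacle to lie precisely in this last, metric, step: pinning down $F$ (equivalently the radical axis) for two circles lying one outside the other, where there are no real common points to join, cannot be done by projective incidence alone and must feed in the orthogonal direction $q$. This is also the natural place to invoke the Steiner--Poncelet circle of Section~\ref{sect:intcircles} once enough metric structure is in hand, and --- I suspect --- the very step where Gram's original argument has the gap to be corrected.
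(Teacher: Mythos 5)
Your outline carries out the projective preliminaries correctly --- recovering $\ell$ from the polars of $p$, reducing each center to the harmonic conjugate of $\ell_\infty$ with respect to the diameter endpoints, identifying the Desargues involution that the pencil cuts out on $\ell$, and observing that its double points (the limit points) are straightedge-constructible --- but it stops exactly at the step where the theorem lives. You never construct $\ell_\infty$ (equivalently $F$, the radical axis, or the centers themselves); your final paragraph only conjectures that this is ``the genuine obstacle'' and that it ``must feed in the orthogonal direction $q$.'' The circularity you run into there is real and cannot be fixed by more incidence work: $F$ is the midpoint of $E'I'$ and $\ell_\infty$ is its harmonic conjugate with respect to $E',I'$, but bisecting a segment lying along $\ell$ already requires $\ell_\infty$; and since all pairs of an involution are projectively alike, nothing in the incidence data singles out the pair $(\ell_\infty,F)$. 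The direction $q$ does not help either, as you note yourself: it only lets you bisect segments perpendicular to $\ell$. So the proposal, as written, is an (accurate) reduction plus a guess about where the difficulty sits, not a proof.

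The paper closes this gap with a tool you never actually deploy: the Steiner--Poncelet theorem combined with explicit algebra over $\C$. If $a$ lies inside one circle (if it lies outside, the tangents from $a$ to one circle meet the other, and joining corresponding intersection points gives parallel chords perpendicular to $\ell$, whose midpoints lie on $\ell$ inside the circles), one chooses a projective coordinate system in which $\omega_1$ is the unit circle with center $a$. In that system a circle together with its center is given, so by Steiner--Poncelet a compass is effectively available; one then computes the coefficients of the conic $\omega_2$, takes its symmetry axis as the $x$-axis, and substitutes $y^2=1-x^2$ to obtain a quadratic for the $x$-coordinates of the four complex intersection points of the two conics. Two of these are the original cyclic points; the real line through them is the original line at infinity, and its poles with respect to $\omega_1$ and $\omega_2$ are the two centers, which can then be plotted in the constructed coordinate system. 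Solving that quadratic with the ``virtual compass,'' working formally with complex coordinates, is precisely the non-projective input your outline lacks. Finally, note that your opening move --- intersecting the polars $g_1$ and $g_2$ of $p$ --- is Gram's original first step and breaks down when $p$ is a limit point of the pencil, since then $g_1=g_2$ and $q$ is not determined; this is exactly the flaw (found by Zaslavsky) that the paper's remark after its proof discusses and repairs, and it cannot be waved off, because the limit-point case includes the pairs of circles of Section~\ref{ss:special cases}.
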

\begin{proof}
Let the circles $\omega_1$ and $\omega_2$ and a point $a$ on the line connecting the centers be given. Consider first the case, when $a$ is inside one of the circles, say, inside $\omega_1$. There is a projective coordinate system such that $\omega_1$ is the unit circle, $a$ is its center and the $x$-axis is the symmetry axis of $\omega_2$ (which is a conic in this coordinate system).

To construct the axis of this coordinate system we first, as in the previous section, construct the axis of a coordinate system such that the origin $a$ is the center of the unit circle~$\omega_1$ (this differs from the required coordinate system by a rotation). As in Section~\ref{sect:threecircles}, we may assume that we have a compass. In this coordinate system we can find the coefficients of the equation of $\omega_2$. Then it is not difficult to find the symmetry axis of the conic. One of the symmetry axis is the required $x$-axis of the sought-for coordinate system.

In this coordinate system, the equation of $\omega_2$ has zero-coefficients at the monomials $y$ and~$xy$. Now, consider the systems of equations for the (complex) intersection points of the conic and the circle. To solve this system, we substitute $1-x^2$ for $y^2$. We get a quadratic equation in $x$; solving the equation we find the (complex) $x$-coordinates of the intersection points of $\omega_1$ and $\omega_2$. Next, we can construct all four complex points of intersection, two of which lie on the line at infinity (in the original coordinate system). The poles of this line with respect to the circles will be the centers of the circles; they can be now easily constructed by a straightedge.

If $a$ is outside the circles (Figure~\ref{fig:construction of center-line}), the tangents from $a$ to one of the circles intersect (or touch) the second circle. Connecting the corresponded points of intersection of tangents with circles we get two parallel lines
perpendicular to the line of the centers. Hence, we can construct the midpoints of the chords they cut from the circles (the construction is reverse to the one described in Section~\ref{ss:midpoint}). These midpoints are inside the circles and on the line connecting the centers, so it remains to argue as in the previous paragraphs.

\end{proof}
\begin{center}
	\includegraphics{fig-ruler-8.mps}\\
	
	\f \label{fig:construction of center-line} Construction of a point on a line through the centers of two circles, lying inside one of the circles.
\end{center}

\begin{remark}
The first step of Gram's proof of his theorem is to construct the line $\ell$ connecting the centers of $\omega_1$ and $\omega_2$: taking polars of $a$ with respect to $\omega_1$ and $\omega_2$ we get two parallel lines, perpendicular to $\ell$. Using them, one can construct two chords $b_1b_2$ and $c_1c_2$ of $\omega_1$ perpendicular to $\ell$. Connecting the point $a$ with intersection of $b_1c_1$ and $b_2c_2$ we get the line~$\ell$.

It was pointed to us by Alexey Zaslavsky, that if $a$ is one of the limit points of the pencils generated by $\omega_1$ and $\omega_2$ this construction does not work: the polar lines of $a$ with respect to $\omega_1$ and $\omega_2$ coincide. Note, that this is exactly the case we have in Section~\ref{ss:special cases}! This is a minor problem in the algebraic proof. The Gram's proof can also be repaired as follows. In his paper Gram shows, that using a straightedge we can ``emulate'' any circle from the pencil generated by $\omega_1$ and $\omega_2$. This means that for any point $p$ one can find the intersection points of any line with the circle from the pencil passing through a given point~$p$. Choose circles $\omega'$ and $\omega''$ from the pencil so that $a$ is outside of them. Now we can repeat arguments from the algebraic proof.
\end{remark}

\section{Further discussion of construction algorithms}\label{sect:further}
Consider the following construction algorithm. At the first step, starting from a configuration $\{a_i,\ell_i,\gamma_i\}$, the algorithm requests four random points from an adversary. The only requirement is that these random points should be in general position, that is, no three points are on the same line and no two segments formed by these points are parallel. (These points are needed for generating an everywhere dense set, from which random points can be selected; see the discussion before Definition~\ref{def:main}). At every next step one adds the lines connecting all pairs of points in the configuration, all the intersection points of lines in the configuration with other lines in the configuration, and the isolated intersection points of the lines with the curves. A point is constructed, if it is in the configuration at some step. We call it a \emph{general algorithm}. We note that the general algorithm does not involve any choices, except by the adversary at the first step.

\begin{proposition}\label{pr:defAlg}
Given a configuration $\{a_i,\ell_i,\gamma_i\}$ of points, lines, and curves, exactly one of the following two statements is true.\\
(i) The point $a$ is non-constructible from the configuration in the sense of Definition~\ref{def:main}.\\
(ii) The general algorithm constructs the point~$a$.
\end{proposition}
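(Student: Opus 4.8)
The plan is to prove the equivalence by establishing two implications, using the contrapositive in each direction. First I would show that (ii) implies the negation of (i): if the general algorithm constructs $a$, then $a$ is constructible (not non-constructible). This direction is essentially the inductive argument already sketched after Definition~\ref{def:main}. Suppose, for contradiction, that the general algorithm constructs $a$ but there also exists a set $\Sigma$ witnessing non-constructibility. The key observation is that the adversary may choose the four initial random points to lie in $\Sigma$; this is possible because $\Sigma$ is dense (property~\eqref{def:main:i}) and points in general position form an open dense condition, so $\Sigma$ contains four points in general position. By properties~\eqref{def:main:ii}, \eqref{def:main:iii}, and~\eqref{def:main:iv}, every point added at every step of the general algorithm stays in $\Sigma$: lines are determined by pairs of points already in $\Sigma$, their pairwise intersections land in $\Sigma$ (when distinct and non-parallel), and isolated intersections with the $\ell_i$ and $\gamma_j$ land in $\Sigma$. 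Hence $a\in\Sigma$, contradicting~\eqref{def:main:v}. Thus at most one of (i), (ii) holds.

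Next I would prove the harder direction: if the general algorithm does \emph{not} construct $a$ (for some choice of initial points by the adversary), then $a$ is non-constructible, i.e.\ we must \emph{exhibit} a witnessing set $\Sigma$. The natural candidate is $\Sigma$ equal to the set of all points the general algorithm ever produces, together with the initial data, closed under the construction operations. Concretely, fix an adversary choice of four initial general-position points for which the algorithm fails to produce $a$, and let $\Sigma$ be the union over all steps of the configurations generated. By construction $\Sigma$ satisfies~\eqref{def:main:ii} (it contains the $a_i$), \eqref{def:main:iii}, and~\eqref{def:main:iv} (it is closed under exactly these operations, since the general algorithm adds precisely all such intersection points at each stage), and~\eqref{def:main:v} (since the algorithm does not construct $a$, and $a\in\Sigma$ would force $a$ into some finite-stage configuration). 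The one property requiring real work is density~\eqref{def:main:i}.

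The main obstacle is therefore establishing that $\Sigma$ is dense in $\R^2$. This is exactly the content alluded to in Remark~\ref{rem:non-constractable defintiion}(iv): starting from four points in general position one can set up a projective coordinate system in which these are the vertices of the unit square, and then iterated straightedge operations on these four points alone already construct all points with rational coordinates in that system. Since the rational points are dense and projective coordinate changes are homeomorphisms away from the line at infinity, the set generated from the four initial points is dense in $\R^2$; a fortiori the larger set $\Sigma$ is dense, giving~\eqref{def:main:i}. I would spell out the Figure~\ref{fig:construction of parallel line}-style construction of arbitrary rational points to make this rigorous, being careful that the constructed grid avoids accidental degeneracies (parallel or coincident lines), which the general-position hypothesis on the four initial points guarantees. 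Finally, I would note the small subtlety that ``the general algorithm does not construct $a$'' should be read as: there exists an adversary strategy under which $a$ is never produced; combined with the first direction, this yields the clean dichotomy, since if $a$ were produced under \emph{every} adversary choice it would be constructible, and otherwise the failing choice furnishes the required $\Sigma$.
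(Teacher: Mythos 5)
Your proposal is correct and follows essentially the same route as the paper's proof: mutual exclusivity via letting the adversary draw all random points from $\Sigma$ and inducting on the construction steps, and the other direction by taking $\Sigma$ to be the set of all points produced by the general algorithm, whose density comes from the four general-position points as in Remark~\ref{rem:non-constractable defintiion}(iv). The only differences are cosmetic: you state the second direction contrapositively (exhibiting the failing run's output as the witness $\Sigma$) and you spell out the adversary quantification and the rational-grid density argument, both of which the paper leaves implicit.
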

\begin{proof}
The paragraph after Definition~\ref{def:main} shows that if $a$ is non-constructible, then it is not constructed by the general algorithm.

Conversely, assume that $a$ is not non-constructible (that is, there is no $\Sigma$ as in Definition~\ref{def:main}). Let $\Sigma$ be the set of all points obtained by the general algorithm. Clearly, it satisfies conditions~\eqref{def:main:ii}--\eqref{def:main:iv} of Definition~\ref{def:main}. Since it contains four given by the adversary points in general position, it is everywhere dense (condition~\eqref{def:main:i}). Thus, by the assumption of the Theorem, $\Sigma$ does not satisfy condition~\eqref{def:main:v}, that is, $a\in\Sigma$.	
\end{proof}

\begin{remark}
We assume that the point $a$ can be recognized, once it is constructed. Note that sometimes the existence of such a test depends on the definition of algorithm. For example we can ``see'' if two lines are parallel, which can be used to recognize the center of a circle. But can we ``see'' that two lines are perpendicular? We prefer to avoid further discussion as it would take us too far.
\end{remark}

\textbf{Acknowledgments.}
The authors have learnt about the problem from Sergey Markelov and Alexander Shen. They are grateful to Sergey for his persistent interest in the problem and useful advice and to Alexander for stimulating discussions about subtleties of straightedge constructions. They thank Alexey Zaslavsky for pointing out a flaw in Gram's proof of his theorem and G\"unter Ziegler for numerous remarks.

The idea of the proof of the main theorem was found during a Summer school ``Modern Mathematics'' held in Dubna, Russia.

\end{document}